\theoremstyle{plain}
\newtheorem{theorem}{Theorem}[section]
\newtheorem{lemma}[theorem]{Lemma}
\newtheorem{corollary}[theorem]{Corollary}
\theoremstyle{definition}
\newtheorem{definition}{Definition}[section]
\theoremstyle{remark}
\newtheorem{remark}[theorem]{Remark}
\date{}
\title{\bf New equidistributions on plane trees and decompositions of $132$-avoiding permutations}
\author{Zi-Wei Bai, Ricky X. F. Chen\\
	\small School of Mathematics, Hefei University of Technology\\[-0.8ex]
	\small Hefei, Anhui 230601, P.~R.~China\\[-0.8ex]
	\small\tt 2021111458@mail.hfut.edu.cn, xiaofengchen@hfut.edu.cn
}
\begin{document}
\maketitle
\noindent{\bf Abstract.}
Our main results in this paper are new equidistributions on plane trees and $132$-avoiding permutations, two closely related objects.
As for the former, we discover a characteristic for vertices of plane trees that is equally
distributed as the height for vertices.
The latter is concerned with
four distinct ways of decomposing a $132$-avoiding permutation
into subsequences.
We show combinatorially that the subsequence length distributions of the four decompositions are mutually equivalent, and
there is a way to group the four into two groups such that each group is symmetric and the joint length distribution of one group
is the same as that of the other.
Some consequences are discussed.
For instance, we provide a new refinement of the equidistribution of internal vertices and leaves,
 and present new sets of $132$-avoiding permutations that are counted by the Motzkin numbers and their refinements.

\vskip 10pt

\noindent{\bf Keywords:} permutation pattern, plane tree, height of a vertex, increasing run, descent, Motzkin number

\noindent{\bf MSC 2010:} 05C05, 05A19, 05A15

\section{Introduction}

Permutations with or without certain patterns have been extensively studied since Knuth's work~\cite{Knuth}.
Let $[n]=\{1,2,\ldots, n\}$ and $\mathfrak{S}_n$ be the symmetric group of permutations on $[n]$. Let $\tau=\tau_1 \tau_2 \cdots \tau_m \in \mathfrak{S}_m$ with $m\leq n$. A permutation $\pi=\pi_1\pi_2 \cdots \pi_n \in \mathfrak{S}_n$ is said to have a pattern $\tau$ if there exists a subsequence
$\pi_{i_1} \pi_{i_2} \cdots \pi_{i_m}$ of $\pi$ such that $\pi_{i_j} < \pi_{i_k}$ if and only if $\tau_j < \tau_k$. If $\pi$ does not have the pattern $\tau$, $\pi$ is called $\tau$-avoiding.
The permutation $\pi$ is said to have a consecutive pattern $\tau$ if there exists a subsequence
$\pi_{i} \pi_{i+1} \cdots \pi_{i+m-1}$ of $\pi$ that provides an occurence of the pattern $\tau$.

It is well understood that
the number of permutations on $[n]$ avoiding a pattern $\tau$ of length three is given by the Catalan number $C_n=\frac{1}{n+1}{2n \choose n}$ for any $\tau \in \mathfrak{S}_3$.
It is also very well known that $C_n$ counts plane trees of $n$ edges and respectively Dyck paths of semilength $n$.
A bijection between $132$-avoiding permutations and plane trees was given in Jani and Rieper~\cite{jani-rieper},
while a bijection between $132$-avoiding permutations and Dyck paths was given in Krattenthaler~\cite{krattenthaler}.
We refer to Claesson and Kitaev~\cite{CK} and references therein for more detailed discussion on bijections related to permutations avoiding a length three pattern.

This paper is mainly concerned with the set $\mathfrak{S}_n(132)$ of $132$-avoiding permutations and plane trees.
We examine four types of decompositions of permutations into subsequences.
When restricted to $132$-avoiding permutations, two of the four decompositions refine ascents and others
refine descents.
In fact, two of them are respectively increasing run and decreasing run decompositions which
have been studied, for instance, in Zhuang~\cite{yzhuang}, and Elizalde and Noy~\cite{eli-noy}.
One of our main results states that the length distributions of the subsequences of the four decompositions are mutually equivalent, and
there is a way to group the four into two groups such that each group is symmetric and the joint length distribution of one group
is the same as that of the other.
We prove this combinatorially, connecting several bijections, some are well-known and
some are recently discovered or new.
As a consequence, we are able to enumerate $132$-avoiding permutations according to a variety of filtrations.
For instance, taking advantage of a result of Elizalde and Mansour~\cite{sergi-toufik}, we present four sets of $132$-avoiding permutations that are respectively counted by the
Motzkin numbers. We additionally carry out some refined enumeration of these sets.

One employed new bijection between plane trees and $132$-avoiding permutations also allows us
to derive a new equidistribution result on plane trees concerning height for vertices.
As a corollary, we immediately recover the well-known fact that internal vertices and leaves are equidistributed.

The paper is organized as follows.
In Section~\ref{sec2}, we introduce the four types of decompositions and present some basic properties.
In Section~\ref{sec3}, several relevant bijections are reviewed. 
In Section~\ref{sec4}, we present a bijection between plane trees and $132$-avoiding permutations that appears
new.
As a consequence,
we introduce the right spanning width of vertices and show this new characteristic is equally distributed
as the height of vertices.
Finally, we prove the equidistribution result of the four decompositions and
provide a number of enumerative results as applications in Section~\ref{sec5}.
For instance, we present four sets of permutations that are all counted by the Motzkin numbers.

\section{Decompositions of permutations}\label{sec2}

For a permutation treated as a sequence, we have many ways to decompose it into different subsequences. Here we are interested in four
distinct decompositions which will be introduced in order.
The four decompositions can be viewed as refinements of the well studied statistics ascents and descents of permutations
as we shall see shortly.
Let $\pi=\pi_{1}\pi_{2}\cdots \pi_{n} \in \mathfrak{S}_n$.
An ascent of $\pi$ is an index $1\leq i<n$ such that $\pi_i < \pi_{i+1}$.
The rest are called descents of $\pi$.
Note that here we always treat $n$ as a descent.

\subsection{Increasing and decreasing run decompositions}

\begin{definition}
Suppose $\pi=\pi_{1}\pi_{2}\cdots \pi_{n} \in \mathfrak{S}_n$. A subsequence $\pi_i \pi_{i+1}\ldots \pi_{i+k-1}$
is called an \emph{increasing run} (IR\footnote{We will write IR (DR and v-CIS defined later)
	in singular as well as plural form.}) if $\pi_i<\pi_{i+1}<\cdots <\pi_{i+k-1}$, and it is not contained in a longer such subsequence.
\end{definition}

Obviously, a permutation $\pi$ can be uniquely decomposed into IR. This decomposition
will be simply referred to as IRD.
For example, we can decompose a permutation $\pi=5346127$ into three IR: 
$$
\tau_{1}=5, \quad \tau_{2}=346, \quad \tau_{3}=127.
$$
An integer partition $\lambda$ of $n$, denoted by $\lambda \vdash n$, is a sequence of non-increasing positive integers $\lambda=\lambda_1\lambda_2\cdots \lambda_l$
such that $\sum_i \lambda_i=n$.
The lengths of the IR of $\pi$ give the length distribution of $\pi$ w.r.t.~IRD.
We will encode the length distribution by an integer partition of $n$.
For example, the length distribution of $\pi=5346127$ w.r.t.~IRD is given by $\lambda=331$.
Note that if $i$ is a descent of $\pi$, then $\pi_i$ is the rightmost (or last) element of an IR while
$\pi_{i+1}$ (if $i<n$) starts a new IR of $\pi$.
Thus, there is an obvious one-to-one correspondence between IR and descents.
Consequently, the set of permutations with $k$ descents can be further refined into subsets by the length distribution of the corresponding $k$ IR.

\emph{Decreasing runs} (DR) and decreasing run decomposition (DRD) are defined analogously.
It is also apparent that the number of ascents of $\pi$ plus one is the same as the number of segments from the DRD of $\pi$.
Consequently, DR with the associated length distribution may be viewed as a refinement of ascents.

\subsection{Value-consecutive increasing subsequences}

An increasing run can be alternatively interpreted as a maximal position-consecutive
increasing subsequence. Then, it suggests a natural counterpart which may be called maximal \emph{value-consecutive increasing subsequences} (v-CIS).
\begin{definition}
A v-CIS {of a permutation $\pi=\pi_{1}\pi_{2}\cdots \pi_{n}$} is a subsequence of the form $\pi_{i_1} \pi_{i_2}\cdots \pi_{i_k}=j(j+1)\cdots (j+k-1)$.
\end{definition}

The decomposition of a permutation $\pi$ into its maximal v-CIS is also referred to as v-CIS (of $\pi$).
Taking $\pi=5346127$ as an example, its v-CIS decomposition gives subsequences: 
$$
567,\quad 34, \quad 12,
$$
the length distribution of which is given by the partition $322$.

\begin{lemma} \label{lem:v-descent}
	The number of descents of $\pi \in \mathfrak{S}_n(132)$ equals the number of segments from its v-CIS.
	In particular, if $i\neq n$ is a descent of $\pi$, then
	$\pi_{i+1}$ starts a maximal v-CIS of $\pi$.
\end{lemma}

\begin{proof}
	Let $\pi=\pi_1\pi_2\cdots \pi_n \in \mathfrak{S}_n(132)$. It suffices to show that there is a one-to-one correspondence
	between the descents $i\neq n$ of $\pi$ and the maximal v-CIS of $\pi$ which
	do not start with $\pi_1$. First, if $i\neq n$ is a descent of $\pi$, then we claim that
	$\pi_{i+1}$ starts a maximal v-CIS of $\pi$. Otherwise, $\pi_j=\pi_{i+1}-1$ for some $j<i$.
	In this case, $\pi_j \pi_i \pi_{i+1}$ yields a $132$ pattern, a contradiction.
	Conversely, suppose $\pi_{i+1}$ ($i>0$) starts a maximal v-CIS.
	If $\pi_{i+1}=1$, then obviously $i$ is a descent;
	If $\pi_{i+1}\neq 1$, then $\pi_j=\pi_{i+1}-1$ for some $j>i+1$ due to the maximality of the v-CIS containing $\pi_{i+1}$.
	Consequently, $\pi_i<\pi_{i+1}$ implies $\pi_i< \pi_{i+1}-1=\pi_j$ ,
	which again yields a $132$ pattern $\pi_i \pi_{i+1} \pi_j$.
	Thus, we always have $\pi_i> \pi_{i+1}$ whence $i$ is a descent of $\pi$.
	In view that there is a maximal v-CIS starting with $\pi_1$
	and $n$ is a descent of $\pi$, the lemma follows.
\end{proof}

We remark that the above relation is not true for a general permutation. For instance, $\pi=153642$ clearly has a $132$ pattern. We
can check that $\pi$ has four descents but only
three segments in its v-CIS.
This is actually interesting as the other three decompositions studied in this paper
are directly related to ascents and descents for general permutations, and may deserve future investigations.

\begin{lemma}\label{lem:nest}
	Let $\pi=\pi_1\pi_2\cdots \pi_n$ be a $132$-avoiding permutation. Then, there do not exist $1\leq i<j<k<l\leq n$ such that $\pi_i$ and $\pi_k$ are contained in the same
	v-CIS $\tau_1$ while $\pi_j$ and $\pi_l$ are contained in the same v-CIS $\tau_2$ where $\tau_1 \neq \tau_2$.
\end{lemma}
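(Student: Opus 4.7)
The plan is to argue by contradiction: assuming such indices $i<j<k<l$ exist, I will exhibit an explicit $132$ pattern in $\pi$. The key observation is that the value sets of two distinct v-CIS are disjoint intervals of consecutive integers, so they are totally separated — either all values of one lie below all values of the other.

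First I would set up notation. Let $A$ denote the v-CIS containing $\pi_i$ and $\pi_k$, and $B$ the v-CIS containing $\pi_j$ and $\pi_l$. Since the elements of a v-CIS occupy consecutive integer values, $A$ and $B$ are intervals in $[n]$; since they are distinct maximal v-CIS, $A\cap B=\emptyset$. Thus either every value in $A$ is smaller than every value in $B$, or vice versa. Moreover, within the same v-CIS the entries appear in increasing order of position, so $\pi_i<\pi_k$ (both in $A$ with $i<k$) and $\pi_j<\pi_l$ (both in $B$ with $j<l$).

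Next I would dispatch the two cases symmetrically. In the case $\max A<\min B$, we get $\pi_i<\pi_k<\pi_j$, and since $i<j<k$, the triple $\pi_i\pi_j\pi_k$ realizes the pattern $132$. In the case $\max B<\min A$, we get $\pi_j<\pi_l<\pi_k$, and since $j<k<l$, the triple $\pi_j\pi_k\pi_l$ realizes the pattern $132$. Either way we contradict $132$-avoidance, completing the proof.

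There is really no significant obstacle here; the only subtlety worth flagging is the need to invoke, at the outset, the elementary fact that the value sets of distinct v-CIS are disjoint intervals of integers (which follows directly from the definition of maximal value-consecutive increasing subsequence). Once that is in hand, the two cases are immediate and essentially a mirror image of each other under the natural symmetry swapping the roles of $(i,k)$ and $(j,l)$.
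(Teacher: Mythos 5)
Your proof is correct and follows essentially the same route as the paper: both arguments split into the two cases according to which of the two v-CIS has the larger values, and exhibit the same $132$ patterns ($\pi_i\pi_j\pi_k$ in one case, $\pi_j\pi_k\pi_l$ in the other). Your write-up just makes explicit the interval/disjointness fact that the paper invokes implicitly via ``by construction.''
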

\proof Suppose such $i,j,k,l$ exist. By construction, if $\pi_j>\pi_i$, then $\pi_j>\pi_k$ as well
since otherwise the three must be contained in the same v-CIS.
Thus, $\pi_i \pi_j\pi_k$ provides a $132$ pattern, a contradiction.
Analogously, if $\pi_j<\pi_i$ (so $\pi_j< \pi_k$), we then have $\pi_l<\pi_k$,
which implies $\pi_j \pi_k\pi_l$ being a $132$ pattern.
Either way yields a contradiction and the lemma follows. \qed

\begin{lemma}\label{lem:noncrossing}
	Let $\pi=\pi_1\pi_2\cdots \pi_n \in \mathfrak{S}_n(132)$.
	Suppose $\tau$ and $\tau'$ are two distinct v-CIS of $\pi$.
	Then, either all elements in $\tau$ lie between two consecutive elements in $\tau'$,
	or all elements in $\tau$ are to the left of the starting element of $\tau'$.
	Moreover, in the former case, the maximal element in $\tau$ is smaller than the minimal element of $\tau'$,
	while in the latter case, the maximal element in $\tau'$ is smaller than the minimal element in $\tau$.
\end{lemma}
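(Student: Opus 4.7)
The plan is to reduce, via the preceding lemma, to two configurations for how the positions of $\tau$ and $\tau'$ interleave, and then within each configuration use $132$-avoidance, together with the maximality of a v-CIS, to pin down the value relation.

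First I would set up notation: write the positions of $\tau$ as $i_1<\cdots<i_s$ with values $\pi_{i_j}=v+j-1$, and those of $\tau'$ as $i'_1<\cdots<i'_t$ with values $\pi_{i'_j}=v'+j-1$. By the preceding lemma the merged sequence of positions, labeled by which of $\tau,\tau'$ each comes from, avoids both $\tau\tau'\tau\tau'$ and $\tau'\tau\tau'\tau$ as subsequences. This restricts the interleaving to at most three runs, giving precisely four types: $\tau$ sandwiched between two consecutive positions of $\tau'$; $\tau'$ sandwiched between two consecutive positions of $\tau$; all of $\tau$ to the left of $\tau'$; or all of $\tau'$ to the left of $\tau$. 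After swapping the roles of $\tau$ and $\tau'$ if necessary, I may assume one of the first two configurations, which is exactly the dichotomy of the statement.

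For the "moreover" part I would exploit that the values of a maximal v-CIS form an integer interval, so the two disjoint value sets $[v,v+s-1]$ and $[v',v'+t-1]$ satisfy exactly one of $\max\tau<\min\tau'$ or $\max\tau'<\min\tau$. In the sandwiched case, with $i'_k<i_1<\cdots<i_s<i'_{k+1}$, suppose $\max\tau'<\min\tau$; then the triple $i'_k<i_1<i'_{k+1}$ carries values $v'+k-1,\,v,\,v'+k$ with $v>v'+t-1\ge v'+k$, producing the $132$-pattern $(1,3,2)$, a contradiction. Hence $\max\tau<\min\tau'$ in this case.

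The subtler case is when $\tau$ lies entirely to the left of $\tau'$ and one must rule out $\max\tau<\min\tau'$. Suppose for contradiction $v+s\le v'$. If $v+s=v'$, the entry with value $v+s$ sits at $i'_1>i_s$, so appending it to $\tau$ yields a strictly longer v-CIS, contradicting the maximality of $\tau$. If $v+s<v'$, the value $v'-1$ lies in neither $\tau$ nor $\tau'$, and by maximality of $\tau'$ it must occur at some position $q>i'_1$ (else it could be prepended to $\tau'$). The triple $i_1<i'_1<q$ then has values $v,v',v'-1$ with $v\le v+s-1<v'-1<v'$, again yielding a $132$-pattern, contradiction; hence $\max\tau'<\min\tau$. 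I expect the main obstacle to be exactly this "left-of" case: from the positions and values of $\tau$ and $\tau'$ alone one cannot always extract a $132$-pattern directly (a configuration like $12\,34$ looks harmless locally), and the key step is to invoke maximality of $\tau'$ to locate the "gap" value $v'-1$ beyond $i'_1$ and thereby build the forbidden pattern.
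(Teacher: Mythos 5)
Your proof is correct and follows essentially the same route as the paper: the positional dichotomy is read off from the non-crossing Lemma~2.3, and the value ordering in each configuration is forced by exhibiting a $132$ pattern (the paper gets its witness in the ``left-of'' case from the descent element supplied by Lemma~2.2, whereas you locate the gap value $v'-1$ via maximality of $\tau'$ --- a minor variation, with your extra subcase $v+s=v'$ handled correctly by maximality of $\tau$). One small slip in wording: after swapping roles you should land in the first or \emph{third} of your four listed configurations (sandwiched, or $\tau$ entirely to the left), not ``the first two''; your subsequent case analysis does treat the correct pair.
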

\proof The first statement follows from Lemma~\ref{lem:nest}.
In the remaining part, the ``former" case is true because the minimal (i.e., starting) element
of $\tau$ is the image of an element determining a descent in view of Lemma~\ref{lem:v-descent};
the ``latter" case is true since otherwise an element from $\tau$, $\pi_j$ and an element from $\tau'$
form a $132$ pattern, where $\pi_j$ determines the descent corresponding to $\tau'$ in the light of Lemma~\ref{lem:v-descent}.
This completes the proof. \qed

\subsection{Layered decreasing envelops}

{
\begin{definition}	
	Let $\pi=\pi_1 \pi_2 \cdots \pi_n$ be a permutation.
	A right-to-left maximum w.r.t. a position $m$ (or starting with $\pi_m$) is an entry $\pi_i$ such that
	$\pi_i > \pi_j$ for all $i< j \leq m$, and $\pi_m$ is always viewed as a right-to-left maximum.
	\end{definition}
	
	Let $\pi=\pi_1 \pi_2 \cdots \pi_n$ be a permutation. Starting with $\pi_n$, we search all right-to-left maxima in $\pi$ which
	will give us a decreasing subsequence of $\pi$ (i.e., a kind of decreasing path);
	Next starting with the position that preceeds the leftmost element of the last found path, we search all
	right-to-left maxima which will give us a new decreasing path;
	Continue doing this until we get a path starting with $\pi_1$.
	At this point, we have found the ``outermost" layer of decreasing paths.
	Imagine we have connected adjacent elements in the same decreasing path by edges.
	Repeat the procedure of obtaining decreasing paths with respect to the segment of entries covered by the same edge in the existing path
	until all entries in $\pi$ have been placed into a path.
	See an example for $\pi=10,8,7,9,11,6,4,3,5,12,1,2$ in Figure~\ref{fig:lde}.
	This decomposition of permutation elements into decreasing subsequences will be called layered decreasing envelop
	decomposition, simply referred to as LDE.

	Apparently, if $i\neq n$ is an ascent of $\pi$, then $\pi_i$ is the rightmost element
	of a decreasing path and vice versa. Taking account of the decreasing path ending with $\pi_n$, the number of paths
	is one greater than the number of ascents of $\pi$.
}
	We will be interested in the length distribution of these decreasing paths.
	For example, the LDE length distribution of $\pi$ in Figure~\ref{fig:lde}
	is the partition $32222$.

\begin{figure}[h]
	\centering
	\includegraphics[width=0.6\columnwidth]{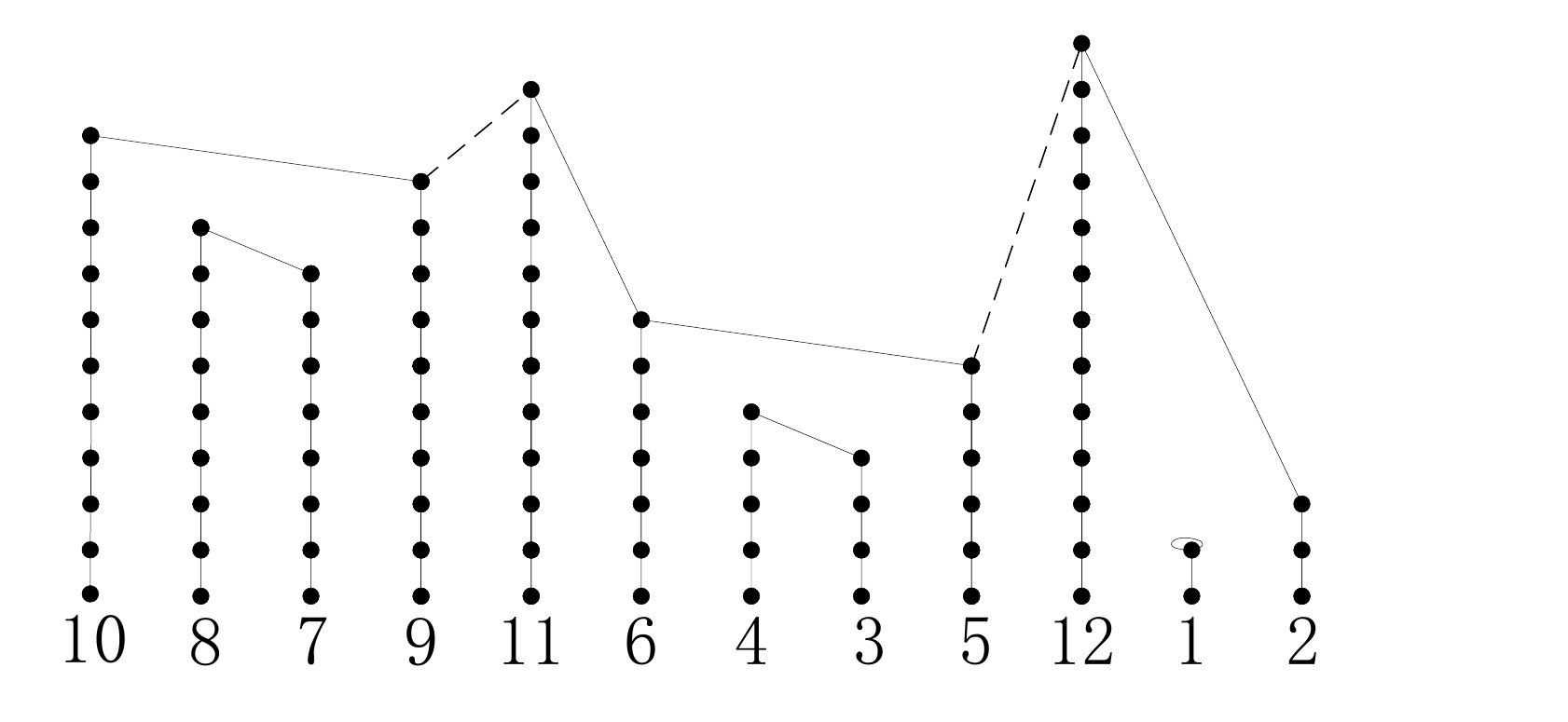}
	\caption{The LDE of a $132$-avoiding permutation, where elements belonging to the same decreasing subsequence are connected by solid lines. }
	\label{fig:lde}
\end{figure}

It is also worth noting that by construction
any two decreasing paths are either in a left-right position or in a covering (or nesting) relation.

\section{Relevant existing bijections}\label{sec3}

In this section, we review several bijections involving plane trees and  $132$-avoiding permutations which can be
found in the literature and will be used later.

\subsection{A bijection $\varphi$ on plane trees}

A \emph{plane tree} $T$ can be recursively defined as an unlabeled tree with one distinguished vertex called the \emph{root} of $T$, where the unlabeled trees resulting from the removal of the root as well as its adjacent edges from $T$ are linearly ordered, and they are plane trees with the vertices in $T$ adjacent to the root of $T$ as their respective roots.  In a plane tree $T$, the number of edges in the unique path from a vertex $v$ to the root of $T$ is called the \emph{height} (or \emph{level}) of $v$, and the vertices adjacent to $v$ but having greater heights are called the \emph{children} of $v$.
The number of children of $v$ is called the \emph{outdegree} of $v$. The vertices on level $2i$ (resp.~$2i+1$) for $i\geq 0$ are called even-level (resp.~odd-level) vertices. A non-root vertex without any child is called a \emph{leaf}, and an \emph{internal vertex} otherwise. The root is always treated as an internal vertex.
We will draw a plane tree with its root on the top level, i.e., level $0$, and with the children of a level $i$ vertex arranged on level $i+1$ (below level $i$) left-to-right following their linear order.

A plane tree can be decomposed into a set of paths where each path has a leaf as a terminal vertex.
There are two ways to achieve that: left path decomposition and right path decomposition. 
\begin{definition}[Left path decomposition]
Suppose all leaves in a plane tree $T$ are ordered by their relative order in the depth-first search of $T$ from left to right. The first path is the path from the first leaf to the root, and
for $t>1$, the $t$-th path goes from the $t$-th leaf up to the first vertex that is already in
a path that has been obtained.
\end{definition}
We will call the multiset consisting of the lengths of the obtained paths the \emph{left path distribution} of the given tree. The right path decomposition is analogous. That is, the paths are successively
obtained from right to left.

RNA plays an important role in various biological processes within a cell.
In particular, RNA secondary structures have been extensively studied from a computational and combinatorial
viewpoint, for the purposes of structure and function predictions. We refer to Smith and Waterman~\cite{waterman4}, and Schmitt and Waterman~\cite{waterman} for the definition and discussion.
Two bijections between RNA secondary structures and plane trees are relevant, one being the Schmitt-Waterman bijection~\cite{waterman}
and the other being the new bijection recently discovered by Chen~\cite{chen3}.
Clearly, with RNA secondary structures as intermediates, we have a bijection $\varphi$ from plane trees to plane trees.
We refer to~\cite{waterman} and~\cite{chen3} for details about the bijections.
What one really need in this paper is the following property of the bijection $\varphi$.

\begin{theorem}[Chen~\cite{chen3}]\label{thm:chen3}
	The bijection $\varphi$
maps a plane tree of $n>0$ edges with $x$ internal vertices of outdegree $q$ and $y$ (left) paths of length $l$ in its left path decomposition
to a plane tree of $n$ edges with $x$ odd-level vertices of outdegree $q-1$ and $y$ even-level vertices
of degree $l$, where in particular, the length of the first (leftmost) path in the former equals the degree of the root of the latter.
\end{theorem}

\subsection{The Jani-Rieper bijection}

An explicit bijection between plane trees and $132$-avoiding permutations was given by Jani and Rieper~\cite{jani-rieper}.
The following is how it works.
Let $T$ be a plane tree of $n$ edges. We use a preorder traversal of $T$ (from left to right) to label the non-root vertices in decreasing order with the integers $n, n-1,\ldots, 1$. As such, the first vertex visited gets the label $n$ and the last receives 1. A permutation written as a word is next obtained by reading the labelled tree in postorder, that is, traverse the tree from left to right and record the label of a vertex when it is last visited.

The reverse from a $132$-permutation to a plane tree was not explicitly presented in Jani and Rieper~\cite{jani-rieper}.
Here we present a procedure and we leave it to the reader to verify its effectiveness.
Let $\pi$ be a $132$-avoiding permutation.
Suppose the IR of $\pi$ from left to right are $\tau_1, \tau_2, \ldots, \tau_k$.
Start with $\tau_k$ and make it into a path with the maximal (i.e., rightmost) element in $\tau_k$ attaching to the root of the expected plane tree $JR(\pi)$.
For example, suppose $\pi=10, 8, 7, 9, 11, 6, 4, 3, 5, 12, 1 ,2$. Then, $\tau_k=12$ and the path will be the path from vertex $1$ to the root of the left tree
in Figure~\ref{fig:new-bij}. After $\tau_i$ is ``placed" in the (partial) tree, we find the minimal element $u$ in the leftmost
path (i.e., the one from the leftmost leaf to the root) in the current partial tree that is larger than the maximal element $x$ in $\tau_{i-1}$,
and attach the path induced by $\tau_{i-1}$ to the tree such that $u$ and $x$ are adjacent;
if no such a $u$ exists, we attach the path induced by $\tau_{i-1}$ to the root of the current tree.
Eventually, we obtain a plane tree.
In the following, we will regard the corresponding plane trees of $132$-avoiding permutations as plane trees with vertex labels,
although the vertex labels are uniquely determined (by the underlying bijections) and can be omitted.
Moreover, we may sometimes use the label of a vertex to refer to the vertex.

\begin{lemma}[Jani-Rieper~\cite{jani-rieper}]
 The longest increasing subsequence in $\pi \in \mathfrak{S}_n(132)$ starting with a leaf is the height of the leaf in $JR(\pi)$.
\end{lemma}

\begin{lemma}\label{lem:JR-distribution}
For $\pi \in \mathfrak{S}_n(132)$, the outdegree distribution of the internal vertices of $JR(\pi)$ is the same as
	the LDE length distribution of $\pi$ while the right path distribution of $JR(\pi)$ is the same as the IR length distribution of $\pi$.
\end{lemma}

\proof By construction, the descendants (if any) of a non-root vertex $v$ in $JR(\pi)$ are smaller than $v$ and in $\pi$ the
descendants appear before $v$ but after any siblings to the left of $v$ in the tree $JR(\pi)$;
For the root of $JR(\pi)$, its children decrease from left to right and their left-to-right
relative order in $\pi$ agrees with the one in $JR(\pi)$.
In particular, the rightmost child of the root is $\pi_n$.
Thus, the outermost and rightmost decreasing path of the LDE of $\pi$ consists of the children of the root of $JR(\pi)$.
Analogously, the children of the root of a subtree of $JR(\pi)$ determine an LDE decreasing path.
Iteration of the reasoning gives rise to the correspondence between the outdegree distribution and LDE length distribution.
See Figure~\ref{fig:lde} and~\ref{fig:new-bij}.
The ``while" part follows from our reverse procedure from $132$-avoiding permutations to plane trees. \qed

\subsection{Odd-even level switching of plane trees}

Given a plane tree $T$, we obtain a new plane tree $T'$ by taking the leftmost child $v$ of the root of $T$ as the root of the new tree $T'$, i.e., lifting $v$ to the top level such that the even-level vertices in $T$ become odd-level vertices in $T'$ and vice versa. This
is clearly a bijection. In addition, it is not difficult to verify that the degree distribution of the even-level (resp.~odd-level) vertices of $T$ becomes the degree distribution of the odd-level (resp.~even-level) vertices of $T'$.

\section{A new bijection and consequences}\label{sec4}

In this section, we first present a bijection between plane trees and $132$-avoiding permutations
which appears to be new. Then, we discuss several applications of the bijection, in particular,
a new equidistribution result on plane trees.

Let $\pi=\pi_{1}\pi_{2}\cdots \pi_{n} \in \mathfrak{S}_n(132)$. Suppose there are $k$ segments in its v-CIS: $\tau_{1}=\pi_{1}^{1}\pi_{2}^{1}\cdots\pi_{i_{1}}^{1},\ldots,\tau_{k}=\pi_{1}^{k}\pi_{2}^{k}\cdots\pi_{i_{k}}^{k}$,
and suppose for $1 \leq i \leq k$,
$$
\pi_1^i=\max\{\pi_1^i, \pi_1^{i+1},\ldots, \pi_1^k\}.
$$
We then construct a plane tree $T=\phi(\pi)$ recursively with the following procedure.
\begin{itemize}
\item First, start with a single vertex and arrange $\pi_{1}^{1},\ldots, \pi_{i_{1}}^{1}$ from left to right as the children of the
vertex.
\item For $j=2$ to $k$, find vertex $\pi_t$ that has already
appeared in the so-far constructed partial tree and is immediately to the left of $\pi_1^j$ in $\pi$; then
let $\pi_{1}^{j},\ldots, \pi_{i_{j}}^{j}$ be the left-to-right children of vertex $\pi_t$.
\end{itemize}
Noticing that the above desired $\pi_t$ always exist due to Lemma~\ref{lem:v-descent} whence we eventually obtain a labelled plane tree $T$.
It is also not difficult to observe that there are $k$ internal vertices in $T$.

For example, $\pi=10 ,8, 7, 9, 11, 6, 4 ,3, 5 ,12, 1, 2$  is a 132-avoiding permutation, and its v-CIS are
$$
\tau_{1}=10, 11, 12 ,\quad \tau_{2}=8 9, \quad \tau_{3}=7, \quad \tau_{4}=6, \quad \tau_{5}=4 5,\quad \tau_{6}=3,\quad \tau_{7}=1 2.
$$
Then, its corresponding labelled plane tree is depicted in Figure~\ref{fig:new-bij} (right).

We will next show that the vertex labels in $T$ are completely determined
by its underlying unlabelled plane tree.
Suppose $u$ and $v$ are two distinct non-root vertices in $T$.
In view of Lemma~\ref{lem:noncrossing}, we have the following two properties:
\begin{itemize}
	\item[(i)] If $u$ is on the path from $v$ to the root of $T$, then the minimum child
	of $u$ is larger than the maximum
	child (if any) of $v$. The reason is, in terms of Lemma~\ref{lem:noncrossing}, the v-CIS $\tau$ corresponding to the children of $v$ lie between
	two consecutive elements of the v-CIS $\tau'$ corresponding to the children of $u$.
	\item[(ii)] If $v$ is a descendant of a vertex on the path from $u$ to the root of $T$ (but
	not a descendant of $u$) and in a sense on the right-hand side of the path, then the minimum child (if any) of $u$ is larger than
	the maximum child (if any) of $v$. The reason for this case is, in terms of Lemma~\ref{lem:noncrossing}, all of the elements in the v-CIS $\tau$ corresponding to the children of $u$ are to the left of the leftmost element of 
the v-CIS $\tau'$ corresponding to the children of $v$.
\end{itemize}
An example of case (i) in Figure~\ref{fig:new-bij} (right) is $u=11$ and $v=4$,
while an example of case (ii) in Figure~\ref{fig:new-bij} (right) is $u=6$ and $v=12$.

As a result, it may not be hard to conclude that the labels of the vertices are uniquely determined by the underlying unlabelled plane tree
as follows: if we travel the internal vertices of the plane tree in the left-to-right depth-first manner, then
the $k$ left-to-right children of the current internal vertex carry the remaining largest $k$ elements from $[n]$
in increasing order.
So, in theory, we can remove the vertex labels and only consider the underlying unlabelled plane tree.
But, we prefer to keeping the vertex labels as it is more convenient to refer to the vertices.

\begin{figure}[h]
\centering
\includegraphics[width=0.8\columnwidth]{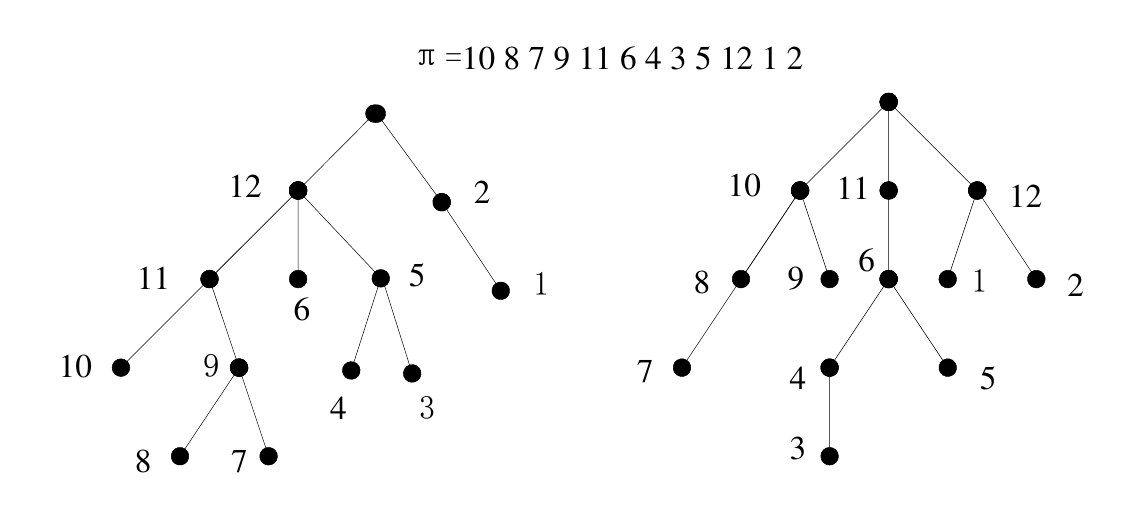}
\caption{The corresponding plane trees under the Jani-Rieper bijection (left) and the bijection $\phi$ (right) of the same $132$-avoiding permutation $\pi$.}
\label{fig:new-bij}
\end{figure}

As for the reverse, from a plane tree that is uniquely labelled in the manner just described right above, it is not difficult to
see that the left-to-right depth-first search (or preorder) gives us the desired $132$-avoiding permutation.
We leave the proofs of this and the following lemma to the reader.

\begin{lemma}\label{new-distribution}
	Given $\pi \in \mathfrak{S}_n(132)$, let $T=\phi(\pi)$.
	Then, the outdegree distribution of internal vertices of $T$ is the same as
	the v-CIS length distribution of $\pi$ while the left path distribution of $T$ equals the DR length
	distribution of $\pi$. In particular, the length of the DR starting with $\pi_1$ in $\pi$ is the same as the length
	of the first path in $T$.
\end{lemma}

As the first application of the bijection $\phi$, we obtain a new equidistribution result on plane trees
which involves a new quantity associated to vertices.

\begin{definition}[Right spanning width]
	Let $v$ be a vertex in a plane tree $T$.
The sum of the number of edges attached to other vertices (than $v$) on the path from $v$ to the root of $T$ from the right-hand side of the path
and the number of children of $v$ is called
the right spanning width of $v$, denoted by $rsw(v)$.
\end{definition}
See an illustration in Figure 4.
In addition, we define
\[
rsw(T)=\max\{rsw(v): \mbox{$v$ is an internal vertex in $T$}\}.
\]

\begin{figure}[h]
\centering
\includegraphics[width=0.28\columnwidth]{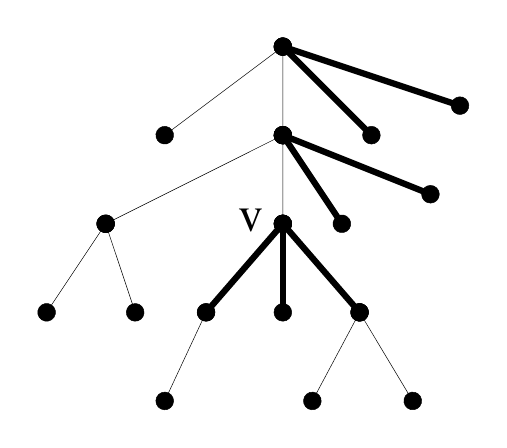}
\caption{The right spanning width of vertex $v$ is the number of bold edges.}
\end{figure}

\begin{theorem}\label{thm:rsw}
	The number of plane trees of $n$ edges the heights of whose vertices constitute a multiset $\mathcal{M}$ of $n+1$ elements
	is the same as the number of plane trees of $n$ edges the right spanning widths of whose vertices constitute $\mathcal{M}$.
	
	Moreover, the number of plane trees of $n$ edges and $k$ leaves whose heights constitute a multiset $\mathcal{M}'$ is the same
	as the number of plane trees of $n$ edges and $k$ internal vertices whose right spanning
	widths constitute $\mathcal{M}'$.
\end{theorem}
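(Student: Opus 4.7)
The plan is to construct the desired bijection on plane trees through the composition $\psi := \phi \circ JR^{-1}$, where $JR$ and $\phi$ are the two bijections from $\mathfrak{S}_n(132)$ to plane trees on $n$ edges discussed in Section~\ref{sec3}. Since both $JR$ and $\phi$ are bijections, $\psi$ bijects plane trees on $n$ edges to themselves. I will show that, for every $\pi \in \mathfrak{S}_n(132)$, the multiset of heights in $JR(\pi)$ equals the multiset of right spanning widths in $\phi(\pi)$, while the number of leaves of $JR(\pi)$ equals the number of internal vertices of $\phi(\pi)$; both parts of the theorem then follow at once.

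First, I would extend Lemma~3.2 to every non-root vertex: for each such $v$ of $JR(\pi)$, the height of $v$ equals $lis_\pi(v)$, the length of the longest increasing subsequence of $\pi$ starting with the label $v$. The path from $v$ to the root in $JR(\pi)$ has strictly increasing labels (parents receive larger labels under the preorder scheme) that appear in $\pi$ in this same order (each parent is recorded after all of its children in postorder), so the height of $v$ is at most $lis_\pi(v)$. Conversely, any $u > v$ occurring after $v$ in $\pi$ is earlier than $v$ in preorder and later in postorder, which forces $u$ to be a proper ancestor of $v$. Therefore the multiset of heights in $JR(\pi)$ equals $\{0\} \sqcup \{lis_\pi(v) : v \in [n]\}$.

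Next, I would prove by induction on $n$ the polynomial identity
\[
\sum_{v \in V(\phi(\pi))} x^{rsw(v)} \;=\; 1 + \sum_{v \in [n]} x^{lis_\pi(v)},
\]
so that the $rsw$-multiset of $\phi(\pi)$ also equals $\{0\} \sqcup \{lis_\pi(v) : v \in [n]\}$. The key ingredient is the recursive structure of $132$-avoiders: Lemma~2.4 forces $\tau_1 = (\pi_1, \pi_1+1, \ldots, n)$ (any v-CIS with a value above $\max(\tau_1)$ would violate the nested/positional dichotomy), so writing the positions of $\tau_1$ as $p_1 < \cdots < p_k$ with $k = n - \pi_1 + 1$, we can decompose $\pi = \pi_{p_1}\sigma_1 \cdots \pi_{p_k}\sigma_k$, where each $\sigma_i$ is a $132$-avoiding permutation on a block $S_i \subset \{1, \ldots, \pi_1 - 1\}$; a second application of Lemma~2.4 yields $\max(S_j) < \min(S_i)$ for $i < j$. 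The subtree of $\phi(\pi)$ rooted at $\pi_{p_i}$ is isomorphic to $\phi(\sigma_i)$, and every vertex $v$ in that subtree satisfies $rsw_{\phi(\pi)}(v) = rsw_{\phi(\sigma_i)}(v) + (k - i)$ because $\pi_{p_i}$ has exactly $k - i$ right siblings in $\tau_1$; correspondingly, $lis_\pi(w) = lis_{\sigma_i}(w) + (k - i)$ for $w \in S_i$, and $lis_\pi(\pi_1 + j - 1) = k - j + 1$ for the $j$-th element of $\tau_1$. Substituting and invoking induction on each $\sigma_i$, both sides reduce to
\[
(1 + x + \cdots + x^k) + \sum_{i=1}^{k} x^{k-i} \sum_{w \in S_i} x^{lis_{\sigma_i}(w)},
\]
closing the induction.

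Finally, the number of leaves of $JR(\pi)$ equals the number of IRs of $\pi$ (each leaf uniquely starts one), and the number of internal vertices of $\phi(\pi)$ equals one plus the number of descent positions $<n$ of $\pi$ (by Lemma~2.2 and the $\phi$ construction); both equal $1 + |\{i < n : \pi_i > \pi_{i+1}\}|$. The main obstacle I anticipate is verifying carefully that the subtree of $\phi(\pi)$ rooted at $\pi_{p_i}$ is honestly $\phi(\sigma_i)$---which requires that the v-CIS of $\pi$ with values in $S_i$ are exactly the v-CIS of $\sigma_i$---and that the offset $k - i$ applies uniformly to all vertices in the subtree, including the root $\pi_{p_i}$ itself.
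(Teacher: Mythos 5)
Your proof of the first assertion is sound and takes a genuinely different route from the paper. The paper works with the same composite bijection $\phi\circ JR^{-1}$ but matches vertices individually: a leaf of $JR(\pi)$ is identified with the leftmost child of an internal vertex of $\phi(\pi)$, and an internal vertex $\pi_i$ of $JR(\pi)$ is paired with the leaf $\pi_{i-1}$ of $\phi(\pi)$, with heights and right spanning widths compared through the common intermediary $lis_\pi$. You instead prove the aggregate identity $\sum_{v}x^{rsw(v)}=1+\sum_{v}x^{lis_\pi(v)}$ by induction on the block decomposition $\pi=\pi_{p_1}\sigma_1\cdots\pi_{p_k}\sigma_k$ forced by $\tau_1=\pi_1(\pi_1+1)\cdots n$; the offsets $k-i$ on both $rsw$ and $lis$ check out, and the two anticipated verification points (the subtree rooted at $\pi_{p_i}$ being $\phi(\sigma_i)$, and the uniform offset including $\pi_{p_i}$ itself) do go through. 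This buys you a cleaner argument for the multiset equality than the paper's vertex-by-vertex sketch.

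However, there is a genuine gap in your treatment of the ``Moreover'' clause. That clause asserts that the multiset $\mathcal{M}'$ of the heights of the $k$ \emph{leaves} of one tree matches the multiset of the right spanning widths of the $k$ \emph{internal vertices} of the other --- this is precisely what makes it a refinement of the leaves/internal-vertices equidistribution. You only verify that the \emph{number} of leaves of $JR(\pi)$ equals the number of internal vertices of $\phi(\pi)$. Knowing that the full $(n+1)$-element multisets agree and that the counts $k$ agree does not tell you which $k$ elements of the multiset are contributed by the leaves on one side and by the internal vertices on the other. Your aggregate generating-function identity deliberately forgets exactly this information. The paper's proof supplies it by exhibiting the explicit pairing: leaves of $JR(\pi)$ (the first elements of increasing runs) correspond to internal vertices of $\phi(\pi)$ (the root together with the $\pi_i$ for descents $i<n$, via $\pi_i\leftrightarrow\pi_{i+1}$ and root $\leftrightarrow\pi_1$), while internal vertices of $JR(\pi)$ correspond to leaves of $\phi(\pi)$. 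To close the gap within your framework you would need to refine the induction to track two generating functions --- one over the leaves of $\phi(\sigma_i)$ and one over its internal vertices --- and show separately that $\sum_{v\ \mathrm{internal\ in}\ \phi(\pi)}x^{rsw(v)}=\sum_{u\ \mathrm{leaf\ of}\ JR(\pi)}x^{\mathrm{height}(u)}$; the block decomposition supports this (the root of $\phi(\pi)$ picks up $x^{k}=x^{lis_\pi(\pi_1)}$, and $\pi_1$ is always a leaf of $JR(\pi)$), but as written the claim is not proved.
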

\begin{proof}
Let $\pi=\pi_1\pi_2\cdots \pi_n \in \mathfrak{S}_n(132)$.
Suppose $T=JR(\pi)$ and $T'=\phi(\pi)$.
First, it is not hard to observe that the label $\pi_i$ of a leaf in $T$ is the label of the leftmost
child of an internal vertex $v$ in $T'$ by construction.
Now consider the longest increasing subsequence $\tau$ starting with $\pi_i$.
Recall in terms of $T$, the length of $\tau$ is the height of the leaf $\pi_i$.
We next show that in terms of $T'$, the length of $\tau$ is $rsw(v)$.
According to $\phi$, the elements larger than $\pi_i$ and to the right of $\pi_i$
must be on the right-hand side of the path from $\pi_i$ to the root of $T'$.
Moreover, they must attach to some internal vertices on the path.
Consequentely, we can easily argue that the longest path starting with $\pi_i$
consists of vertices incident to the internal vertices on the path where there are $rsw(v)$
of them.

Next, for the label $\pi_i$ of an internal vertex in $T$, the longest increasing subsequence starting with $\pi_i$ in $\pi$
also equals the height of $\pi_i$ in $T$.
By construction, $\pi_i$ being of an internal vertex in $T$ implies that $i\neq 1$ and $\pi_{i-1}<\pi_i$,
i.e., $i-1$ is an ascent of $\pi$.
However, in $T'$, $\pi_{i-1}$ must be a leaf.
Similar to the above discussion, the longest increasing subsequence starting with $\pi_{i-1}$
has length exactly $rsw(\pi_{i-1})+1$ in $T'$.
Accordingly, the length of the longest increasing subsequence starting with $\pi_{i}$ equals $rsw(\pi_{i-1})$, where
$\pi_{i-1}$ is a leaf in $T'$, and the theorem follows.
\end{proof}

\begin{remark}
Recall the height of a plane tree is the maximum height of leaves in the tree.
While average height of various trees and a single leaf there were examined in a plethora of work,
see for instance, de Bruijin, Knuth and Rice~\cite{height72}, Kemp~\cite{hemp83}, and Prodinger~\cite{prodinger84},
to the best of our knowledge, no statistics have been found to be equidistributed as height before.
It is true that the right spanning width of a vertex appears not as natural as the height of a vertex.
However, Theorem~\ref{thm:rsw} also provides a valuable instance of equidistribution results that
are somehow different from most of existing equidistribution results on plane trees.
Namely, equidistribution results are usually in the form: a statistic $A$ (e.g., degree) over a subset of vertices, e.g., internal vertices (instead of all vertices) 
is equally distributed as a statistic $B$ over another subset of vertices.
This kind of results cannot be generalized to the form: $A$ and $B$ are equidistributed over all vertices.  
See such an example in Theorem~\ref{thm:chen3} (internal vertices vs odd-level vertices).

\end{remark}

\begin{remark}[Open problem]
	It is natural to define left spanning width ($lsw$) of a vertex analogously.
	We believe there exists another statistic $A$ associated to vertices such that the pair
	$(lsw, rsw)$ is in equidistribution with the pair of $A$ and height of vertices.
	However, we failed to find it.
\end{remark}

\begin{corollary}
The number of plane trees of $n$ edges and height $k$ equals the number of plane trees $T$ of $n$ edges with $rsw(T)=k$.
\end{corollary}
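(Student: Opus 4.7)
The plan is to deduce the corollary as an immediate aggregation of the refined second half of Theorem 3.5. That statement asserts that for every $\ell\geq 0$ and every multiset $\mathcal{M}'$ of size $\ell$, the number of plane trees of $n$ edges having $\ell$ leaves whose heights form $\mathcal{M}'$ equals the number of plane trees of $n$ edges having $\ell$ internal vertices whose right spanning widths form $\mathcal{M}'$.

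The key observation is that the two statistics appearing in the corollary are precisely the maxima of the two multisets being equidistributed in Theorem 3.5: by the paper's definitions, the height of a plane tree is the maximum height over its leaves, and $rsw(T)$ is the maximum of $rsw(v)$ over internal vertices $v$. Consequently, for any fixed $k\geq 0$, summing the identity of Theorem 3.5 over all pairs $(\ell,\mathcal{M}')$ with $|\mathcal{M}'|=\ell$ and $\max\mathcal{M}'=k$ yields
\[
\#\{T : \text{height}(T) = k\} \;=\; \#\{T : rsw(T) = k\},
\]
which is precisely the corollary.

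The ``hard part'' is essentially absent: no further combinatorial argument is needed beyond recognising that the maximum of a multiset is invariant under any matching of the multisets. If one preferred to use the first (unrefined) half of Theorem 3.5, which compares the multisets of heights and of $rsw$ taken over all $n+1$ vertices, one would additionally have to verify that the vertex-wide maximum of $rsw$ is attained at an internal vertex and that the vertex-wide maximum of height is attained at a leaf. Both reductions are easy---any internal vertex is dominated in height by its children, and any leaf $v$ with parent $u$ satisfies $rsw(u)-rsw(v) = 1 + (\text{number of left siblings of } v) \geq 1$ by unpacking the definition of $rsw$ (the contributions from ancestors strictly above $u$ cancel, and at $u$ itself one picks up all of $u$'s children while $v$ only inherits the right siblings via its path-edge and contributes zero children of its own). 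The refined form of Theorem 3.5, however, bypasses this bookkeeping altogether, so the corollary is a one-line consequence.
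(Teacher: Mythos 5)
Your proof is correct and takes the same route the paper intends: the corollary is stated without proof as an immediate consequence of Theorem 3.5, obtained exactly as you say by summing the refined (leaf-height versus internal-vertex-$rsw$) identity over all multisets $\mathcal{M}'$ with maximum $k$, since height of $T$ and $rsw(T)$ are by definition the maxima of those two multisets. Your side remarks on deducing it instead from the unrefined half are also accurate but unnecessary.
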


Apparently, the latter part of Theorem~\ref{thm:rsw} refines the following well-known fact.

\begin{corollary}
	The number of plane trees of $n$ edges with $k$ internal vertices is the same as the number of plane trees of $n$ edges with $k$ leaves.
\end{corollary}

It is well known that the number of Dyck paths of semilength $n$ with $k$ peaks
is given by the Narayana number $N(n,k)=\frac{1}{n}{n\choose k} {n\choose k-1}$ with $N(0,0)=1$.
In Callan~\cite{callan22}, it was proved via a novel combinatorial argument that the number of Dyck paths of semilength $n$ with $i$ returns to ground level and $j$ peaks is the generalized Narayana number 
$$
N_{i}(n,j)=\frac{i}{n}{n\choose j}{{n-i-1}\choose {j-i}}.
$$

\begin{theorem}
	The number of $132$-avoiding permutations on $[n]$ starting with $i$ and having $k$ descents is given by $\frac{n+1-i}{n}{{n}\choose {k-1}}{{i-2}\choose{i-k}}$.
	Furthermore,
	the number of $132$-avoiding permutations on $[n]$ starting with $i$, ending with $j$ and having $k$ descents is
	\begin{align}
		\begin{cases}
			N_{n-i}(n-1,n-k), \quad &\mbox{if $i<j$},\\
			\sum_{m=1}^{k-1}N_{n+1-i}(n-j,n-m+1)N(j-1,j+m-k+1), \quad &\mbox{else}.
		\end{cases}
	\end{align}
\end{theorem}

\begin{proof}
	
	For $\pi \in \mathfrak{S}_n(132)$ in question in the first part of the theorem, its v-CIS containing the first element $i$ is $i(i+1)\cdots n$.
	According to the bijection $\phi$, this implies the outdegree of the root of the tree $\phi(\pi)$ equals $n+1-i$.
	Recall that the number of descents of $\pi$ is equal to the number of v-CIS of $\pi$ which equals the number of
	internal vertices in $\phi(\pi)$. So, the number of
	leaves in $\phi(\pi)$ is $n+1-k$.
	It is well known that the set of plane trees of $n$ edges and with these features
	has the same size as the set of Dyck paths of semilength $n$ with $n+1-i$ returns to ground level and $n+1-k$ peaks.
	As a result, the first statement follows.

	As for the remaining part, we need the following {\em claim}: In a $132$-avoiding permutation ending with $j$, the last $j$ entries are from the set $[j]$.
	Otherwise, there is no difficulty to show there exists a $132$ pattern.
	
	In order to prevent the appearance of $132$ patterns, if $i<j$ then $j=n$.
	Then, the considered set is equivalent to the set of $132$-avoiding permutations on $[n-1]$ starting with $i$ and having $k$ descents which
	is given by $N_{n-i}(n-1,n-k)$ as in the first statement.
	
	Similarly, if $i>j$ and $j=1$, the considered set is equivalent to the set of $132$-avoiding permutations on $[n-1]$ starting with $i-1$ and having $k-1$ descents which
	is given by $N_{n+1-i}(n-1,n-k+1)$.

	If $i>j$ and $j\neq1$, from the above claim, any $132$-avoiding permutation in this case is a concatination of two $132$-avoiding
	permutations: one is on $[n]\setminus [j]$ which starts with $i$ and has $m$ descents, and the other
	is on $[j]$ which ends with $j$ and has $k-m$ descents, for some $1\leq m < k$.
	The former are equivalent to $132$-avoiding permutations
	on $[n-j]$ which start with $i-j$ and have $m$ descents,
	while the latter are equivalent to $132$-avoiding permutations on $[j-1]$ having $k-m$ descents
	which are in one-to-one correspondence with plane trees with $j$ vertices $k-m$ of which are internal via $\phi$.
	Consequently, the desired number is clearly
	$$
	\sum_{m=1}^{k-1}N_{n+1-i}(n-j,n-j-m+1)N(j-1,j+m-k).
	$$
	Note that $N(0,0)=1$ and $N(0,x)=0$ for $x>0$. The last quantity agrees to the number for the case $j=1$,
	and the proof follows.	
\end{proof}

\section{Equidistributions on $132$-avoiding permutations}\label{sec5}

Our main goal in this section is to prove the following new equidistribution result on $132$-avoiding permutations.
We point out that common tricks such as taking the inverse, taking the reverse (i.e., reading from right to left) or taking the complement (i.e., replacing $i$ with $n+1-i$)
will not work since the resulting permutations may not be $132$-avoiding permutations
anymore. Instead, we will connect several bijections carefully in order for proving
the theorem.

\begin{theorem}\label{thm:equi}
Given two partitions $\lambda, \mu \vdash n$, the following four sets are of equal size:
\begin{itemize}

\item[(1)] $\pi \in \mathfrak{S}_n(132)$ whose IRD and LDE length distributions are resp.~$\lambda$ and $\mu$.

\item[(2)]
$\pi \in \mathfrak{S}_n(132)$ whose IRD and LDE length distributions are resp.~$\mu$ and $\lambda$.

\item[(3)] $\pi \in \mathfrak{S}_n(132)$ whose v-CIS and DRD length distributions are resp.~$\lambda$ and $\mu$.

\item[(4)] $\pi \in \mathfrak{S}_n(132)$ whose v-CIS and DRD length distributions are resp.~$\mu$ and $\lambda$.
\end{itemize}
\end{theorem}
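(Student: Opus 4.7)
The plan is to translate all four conditions into statistics on plane trees and then to establish the desired equidistributions via two well-known symmetries of plane trees, using the dictionary provided by the bijections of Section~3.

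First I would apply the bijections of Section~3: by Lemma~3.3 the Jani--Rieper bijection $JR$ identifies set $(1)$ with the plane trees of $n$ edges whose right path length distribution is $\lambda$ and whose internal vertex outdegree distribution is $\mu$, and set $(2)$ with those for which these two statistics are $\mu$ and $\lambda$ respectively. In parallel, Lemma~\ref{new-distribution} identifies set $(3)$ via $\phi$ with the plane trees whose internal outdegree distribution is $\lambda$ and left path length distribution is $\mu$, and set $(4)$ with these two distributions interchanged.

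The first symmetry I would invoke is the left--right mirror reflection, a trivial involution on plane trees that preserves every outdegree and exchanges the left and right path length distributions. Composing $JR$, mirror reflection, and $\phi^{-1}$ then gives a direct bijection $(1)\leftrightarrow(4)$, and the same recipe yields $(2)\leftrightarrow(3)$. It therefore suffices to prove $(3)=(4)$, that is, that plane trees with (internal outdegree, left path length) $=(\lambda,\mu)$ and those with $(\mu,\lambda)$ are equinumerous. For this I would use the bijection $\varphi$ of Subsection~3.1: by Theorem~3.1, $\varphi$ sends the internal outdegree distribution of $T$ to the odd-level degree distribution of $\varphi(T)$ (since an odd-level, and hence non-root, vertex of outdegree $q-1$ has degree $q$), and sends the left path length distribution of $T$ to the even-level degree distribution of $\varphi(T)$. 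The odd--even level switching of Subsection~3.4 is a bijection on plane trees that swaps the odd-level and even-level degree distributions. Consequently, the composition $\varphi^{-1}\circ(\text{level switching})\circ\varphi$ is a bijection on plane trees that exchanges the internal outdegree distribution with the left path length distribution, which via $\phi$ delivers the desired bijection $(3)\leftrightarrow(4)$.

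The main obstacle I anticipate is the outdegree--versus--degree bookkeeping in Theorem~3.1: the correspondence sends an internal vertex of outdegree $q$ in $T$ to an odd-level vertex of outdegree $q-1$, hence degree $q$, in $\varphi(T)$; moreover, under the odd--even level switching the old root (whose degree equals its outdegree) exchanges roles with an odd-level non-root vertex, and vice versa. I will need to verify that these two ``$+1$'' adjustments at the root compose cleanly through $\varphi$, the switching, and $\varphi^{-1}$ so that the multisets of internal outdegrees and of left path lengths are exchanged exactly, with no boundary errors at the roots of the intermediate trees.
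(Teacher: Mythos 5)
Your proposal is correct and uses exactly the same ingredients as the paper's proof (the Jani--Rieper bijection and Lemma~3.3, the bijection $\phi$ and Lemma~\ref{new-distribution}, mirror reflection, $\varphi$ with Theorem~3.1, and odd--even level switching), merely composing them in a slightly different order: you establish $(1)\leftrightarrow(4)$ and $(2)\leftrightarrow(3)$ by mirror$+\phi$ and then $(3)\leftrightarrow(4)$ by the $\varphi$/level-switching sandwich, whereas the paper does $(1)\leftrightarrow(2)$ by that sandwich and then connects $(1)$ to the v-CIS/DRD sets by mirror$+\phi$. Your bookkeeping is in fact slightly more careful than the paper's: the composition $\phi^{-1}\circ(\text{mirror})\circ JR$ applied to set $(1)$ lands in set $(4)$ as you say (v-CIS $=\mu$, DRD $=\lambda$), while the paper's text labels the image as set $(3)$, a harmless slip since all four sets are being shown equal.
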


\begin{proof}
	We first prove the sets of (1) and (2) have the same size.
	Let $\pi\in \mathfrak{S}_n(132) $ and suppose its IRD and LDE length distributions are resp.~$\lambda$ and $\mu$.
	Let $T_1=JR(\pi)$, i.e., the corresponding tree of $\pi$ under the Jani-Rieper bijection. Denote by $T_2$ the mirror image of $T_1$ (i.e., horizontal flipping). Clearly, in view of Lemma~\ref{lem:JR-distribution}, the internal vertex outdegree distribution of $T_2$ is $\mu$ and the left path length distribution of $T_2$ is $\lambda$.
	Accordingly, following from Chen's bijection $\varphi$ (Theorem~\ref{thm:chen3}), the odd-level outdegree distribution of $T_3=\varphi(T_2)$ is $\mu-1$ (that is, $(\mu_1-1)(\mu_2-1)\cdots (\mu_k-1)$ for $\mu=\mu_1\mu_2\cdots \mu_k$) while the even-level degree distribution of $T_3$ is $\lambda$.
	
	Let $T_4$ be the resulting tree via the odd-even level switching transform from $T_3$.
	Then, it is easily seen that the odd-level outdegree distribution of $T_4$ is $\lambda-1$ while the even-level degree distribution of $T_4$ is $\mu$.
	 According to Theorem 3.1, the internal vertex outdegree distribution of $T_5=\varphi^{-1} (T_4)$ is thus $\lambda$ and the left path length distribution of $T_5$ is $\mu$.
	Consequently, the mirror image $T_6$ of $T_5$ has the internal vertex outdegree distribution $\lambda$ and the right path length distribution $\mu$.
	Thus, the corresponding permutation $\pi'$ of $T_6$ under the Jani-Rieper bijection has the IRD and LDE length distributions resp.~$\mu$ and $\lambda$. 
	
	See Figure~\ref{fig:equidistribution} for an illustration.
	Obviously, the correspondence between $\pi$ and $\pi'$ is a bijection as it is essentially the composition of a number of bijections.
	Hence, the sets of (1) and (2) are of equal size.
	\begin{figure}[h]
		\centering
		\includegraphics[width=0.8\columnwidth]{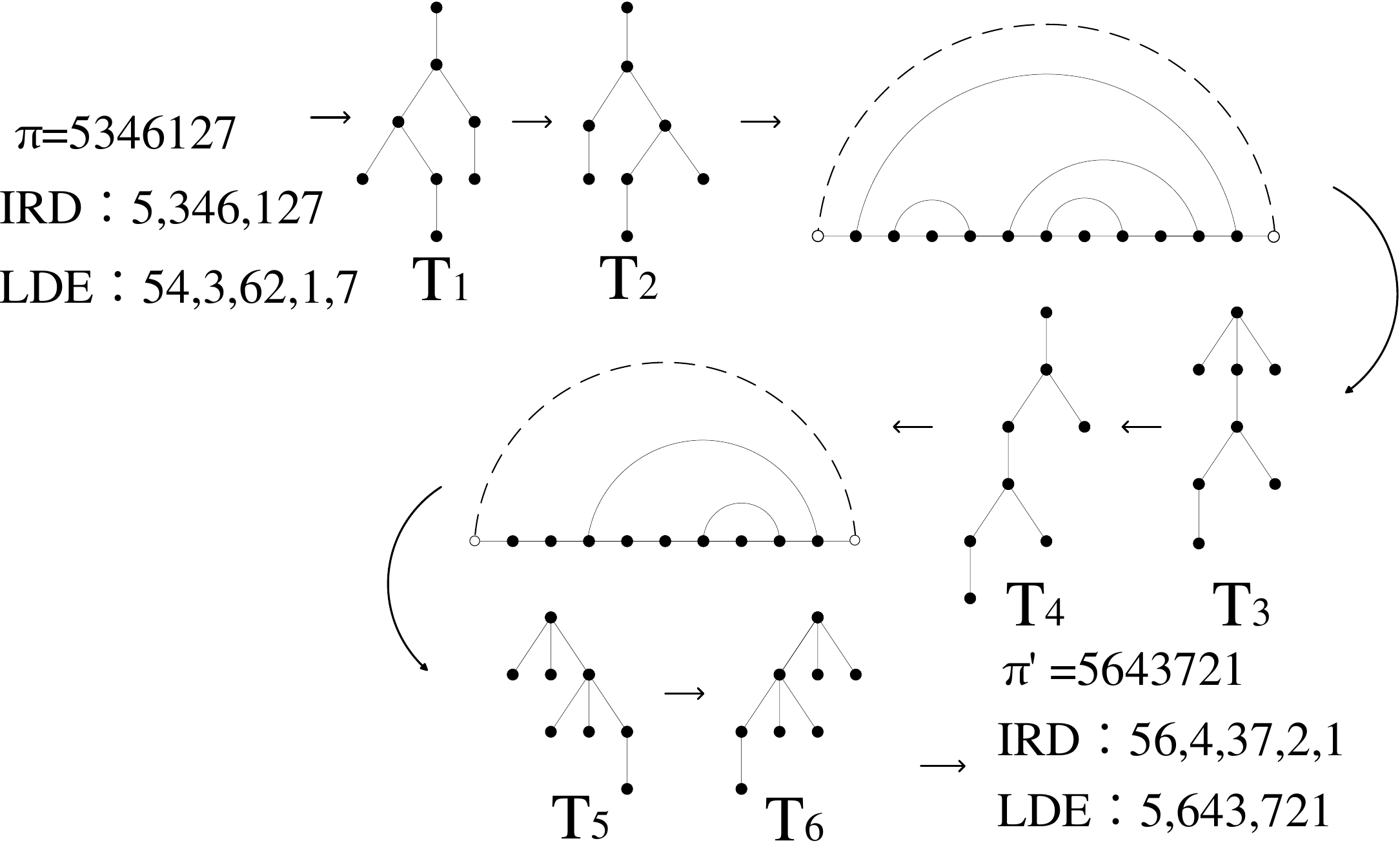}
		\caption{The correspondence between set of (1) and set of (2). }
		\label{fig:equidistribution}
	\end{figure}
	
	Next we prove the sets of (1) and (3) contain the same number of $132$-avoiding permutations.
	Again, let $\pi\in \mathfrak{S}_n(132) $ and suppose its IRD and LDE length distributions are resp.~$\lambda$ and $\mu$.
	Let $T_1=JR(\pi)$, and denote by $T_2$ the mirror image of $T_1$.
	 Then, the internal vertex outdegree distribution of $T_2$ is $\mu$ and the left path length distribution of $T_2$ is $\lambda$.
	 Suppose $\pi'$ is the corresponding permutation of $T_2$ under the bijection $\phi$, i.e., $\pi'=\phi^{-1}(T_2)$.
	 According to Lemma~\ref{new-distribution}, the v-CIS and DRD length distributions of $\pi'$ are resp.~$\lambda$ and $\mu$.
	 The correspondence is apparently reversible, so the sets of (1) and (3) are of equal size.
	 Other pairs from the four sets can be done analogously, completing the proof.
\end{proof}

In the rest of the paper, we present some applications of Theorem~\ref{thm:equi}.
Recall that the Motzkin numbers $M_n$ can be defined by their generating function:
$$
\sum_{n \geq 0} M_n x^n=\frac {1-x-\sqrt{1-2x-3x^2}}{2x^2}.
$$
Elizalde and Mansour~\cite{sergi-toufik} showed that
the number of $132$-avoiding permutations in $\mathfrak{S}_n(132)$ that also avoid the consecutive pattern $123$ is counted by
the Motzkin number $M_n$.
Note that avoiding the consecutive pattern $123$ is tantamount to requiring all IR have length at most two.
As an immediate corollary of Theroem~\ref{thm:equi}, we present more sets of permutations which are counted by the Motzkin numbers.

\begin{corollary}[Motzkin family]\label{cor:motzkin}
	The following four sets of permutations are all counted by the Motzkin number $M_n$:
	\begin{itemize}
		
		\item[(1)] $\pi \in \mathfrak{S}_n(132)$ whose IR have length at most two;
		
		\item[(2)]
		$\pi \in \mathfrak{S}_n(132)$ whose LDE decreasing paths have length at most two;
		
		\item[(3)] $\pi \in \mathfrak{S}_n(132)$ whose v-CIS have length at most two;
		
		\item[(4)] $\pi \in \mathfrak{S}_n(132)$ whose DR have length at most two.
	\end{itemize}
\end{corollary}

We next present more enumerative results.
Some notation are needed to proceed.
An \emph{(unlabelled) set-alternating E-tree} is a plane tree where the even-level vertices carry (indistinguishable) labels from a set $E$
and the odd-level vertices carry (indistinguishable) labels from a set $O$.
\emph{O-trees} are defined similarly.

For $t \geq 0$, let $\kappa_t(n,m)$ denote the number of weak compositions of $n$ into $m\geq 0$ parts each of which
is no larger than $t$, i.e., $a_1+a_2+\cdots + a_m=n$ and $a_i$ is an integer satisfying $0\leq a_i \leq t$, where we
make the convention that $\kappa_t(0,0)=1$. Obviously, $\kappa_t(n,m)=0$ if $n<0$.

\begin{theorem}\label{thm:comp-gen}
	Suppose $p+q=n+1$. Then,
	the number of $\pi=\pi_1\pi_2 \cdots \pi_n \in \mathfrak{S}_n(132)$ with $p$ DR of which the one starting with $\pi_1$ has a length at most $h$ while each of the rest is of length no longer than $h+1$ and with
	$q$ v-CIS
	each of which is no longer than $l$ is given by
	\begin{align}\label{eq:comp-gen}
		\kappa_{l}(p-1,q)\kappa_{h}(q,p)-\Big\{\sum_{i=0}^{l-1}(i+1)\kappa_{l}(p-i-2,q-1)\Big \} \Big\{ \sum_{j=0}^{h-1}(j+1)\kappa_{h}(q-j-1,p-1)\Big \}.
	\end{align}
\end{theorem}
\proof Combining Theorem~\ref{thm:chen3} and Lemma~\ref{new-distribution}, the set of $132$-avoiding permutations under consideration
has the same size as the set of plane trees of $n$ edges where there are $p$ even-level vertices, each with outdegree at most $h$ and $q$ odd-level vertices, each with outdegree at most $l$.
 Let

\[\omega_{1}(t_{1}, t_{2})=\sum_{T\in\mathfrak{F}_{E}}t_{1}^{\# \text{vertices in $E$ in $T$}}t_{2}^{\# \text{vertices in $O$ in $T$}},\]
\[\omega_{2}(t_{1}, t_{2})=\sum_{T\in\mathfrak{F}_{O}}t_{1}^{\# \text{vertices in $E$ in $T$}}t_{2}^{\# \text{vertices in $O$ in $T$}},\]
where $\mathfrak{F}_{E}$ denotes the set of unlabelled set-alternating E-trees with every $E$-vertex having at most $h$ children, every $O$-vertex having at most $l$ children, while $\mathfrak{F}_{O}$ denotes the set of unlabelled set-alternating O-trees with every $E$-vertex having at most $h$ children, every $O$-vertex having at most $l$ children. Clearly, the number in question is $[t_{1}^{p} t_{2}^{q}]\omega_{1}$.

Note that the following relation is obvious
\[\omega_{1}=t_{1}\frac{1-\omega_{2}^{h+1}}{1-\omega_{2}},\quad \omega_{2}=t_{1}\frac{1-\omega_{1}^{l+1}}{1-\omega_{1}}.\]
Let
\[
g(x_{1}, x_{2})=x_{1},\quad f_{1}(x_{1}, x_{2})=\frac{1-x_{2}^{h+1}}{1-x_{2}},\quad f_{2}(x_{1}, x_{2})=\frac{1-x_{1}^{l+1}}{1-x_{1}}.
\]
Making use of the bivariate Lagrange inversion formula~\cite{BR,gessel}, we have

\begin{align*}
&[t_{1}^{p} t_{2}^{q}]\omega_{1}=[x_{1}^{p} x_{2}^{q}]g \cdot f_{1}^{p} \cdot f_{2}^{q}
\begin{bmatrix}
	{1-\frac{x_{1}}{f_{1}}\frac{\partial f_{1}}{\partial x_{1}}}&-\frac{x_{1}}{f_{2}}\frac{\partial f_{2}}{\partial x_{1}}\\
	-\frac{x_{2}}{f_{1}}\frac{\partial f_{1}}{\partial x_{2}}&{1-\frac{x_{2}}{f_{2}}\frac{\partial f_{2}}{\partial x_{2}}}
\end{bmatrix}\\
	=&[x_{1}^{p}x_{2}^{q}] \frac{x_1(1-x_{1}^{l+1})^{q}}{(1-x_{1})^{q}} \frac{(1-x_{2}^{h+1})^{p}}{(1-x_{2})^{p}}\\
	&\quad \times \left[1-\frac{x_{1}x_{2}}{(1-x_{1}^{l+1})(1-x_{2}^{h+1})}
	\Big(\frac{1-x_{1}^{l}}{1-x_{1}}-lx_{1}^{l}\Big)\Big(\frac{1-x_{2}^{h}}{1-x_{2}}-hx_{2}^{h}\Big)\right]\\
	=&\kappa_{l}(p-1,q)\kappa_{h}(q,p)-\sum_{i=0}^{l-1}(i+1)\kappa_{l}(p-i-2,q-1)\sum_{j=0}^{h-1}(j+1)\kappa_{h}(q-j-1,p-1),	
\end{align*}
where the last simplication follows from Lemma~\ref{lem:computation} in the Appendix.\qed

Let $h$ go to infinity, we obtain

\begin{corollary}
The number of $132$-avoiding permutations $\pi=  \mathfrak{S}_n(132)$  with $q$ IR each of which has a length
at most $l$ is given by
\begin{align}
\sum_{i = 0}^{l} \left\{ {n \choose q}- i {n \choose q-1}\right\} \kappa_{l}(n-q-i,q-1).
\end{align}
\end{corollary}
\proof When $h$ is large enough, we obviously have
$$
\kappa_{h}(n,m)={n+m-1 \choose m-1}.
$$
Then, eq.~\eqref{eq:comp-gen} reduces to
$$
{q+p-1 \choose p-1} \kappa_{l}(p-1,q) -\Big\{ \sum_{j = 0}^{q-1} (j+1) {q+p-3-j \choose p-2}\Big\} \Big\{ \sum_{i = 0}^{l-1} (i+1) \kappa_{l}(p-i-2,q-1)\Big\}.
$$
Next, it is not hard to show by generating functions that
$$
\sum_{k=r}^{n-s}{k \choose r}{n-k \choose s}={n+1 \choose r+s+1}.
$$
Then, we have
\begin{align*}
 &\sum_{j = 0}^{q-1} (j+1) {q+p-3-j \choose p-2} = \sum_{j = 0}^{q-1} {j+1 \choose 1}  {q+p-2-j-1 \choose p-2}
 =  {n \choose p}.
\end{align*}
Consequently, eq.~\eqref{eq:comp-gen} equals
\begin{align*}
& {q+p-1 \choose p-1} \kappa_{l}(p-1,q) - {n\choose p} \Big\{ \sum_{i = 0}^{l-1} (i+1) \kappa_{l}(p-i-2,q-1)\Big\}\\
=& \sum_{i = 0}^{l} \left\{ {n \choose p-1}- i {n \choose p}\right\} \kappa_{l}(p-1-i,q-1).
\end{align*}
Note that IR and v-CIS are equidistributed from Theorem~\ref{thm:equi}, then the proof follows.
\qed

The numbers obtained by setting $l=2$ provide a refinement of the Motzkin numbers.
To be specific, we have the following result.

\begin{corollary}
	The number of $132$-avoiding permutations in $\mathfrak{S}_n(132)$ that avoid the
	consecutive pattern $123$ and have $q$ descents is given by
\begin{align}
	\sum_{i = 0}^{2} \left\{ {n \choose q}- i {n \choose q-1}\right\} \kappa_{2}(n-q-i,q-1).
\end{align}
	Moreover, we have
	\begin{align}
		M_n=\sum_{q} \sum_{i = 0}^{2} \left\{ {n \choose q}- i {n \choose q-1}\right\} \kappa_{2}(n-q-i,q-1).
	\end{align}
\end{corollary}

Finally, we leave it to the interested reader to show $\kappa_{2}(n,m)$ can be explicitly computed as follows:
\begin{align}
	\kappa_{2}(n,m)=\sum_{i=0}^m {m \choose i}{m-i \choose 2m-2i-n}.
\end{align}
We remark that further enumerative results can be obtained and some of them may be found in the arXiv versions of the work.
IR and DR can be formulated into consecutive monotone patterns. Which patterns correspond to
v-CIS and LDE paths? IR and LDE (resp. DR and v-CIS) are somehow intertwined, and what their joint length distribution really reveal w.r.t.~the structure of $132$-avoiding permutations remains unclear. These problems are interesting and left for future
investigations.

\section*{Acknowledgments}
The authors thank an anonymous referee for comments.
 The work was partially supported by the Anhui Provincial Natural Science Foundation of China (No.~2208085MA02)
 and Overseas Returnee Support Project on Innovation and Entrepreneurship of Anhui Province (No.~11190-46252022001).

\appendix

\section{Computation in Theorem~\ref{thm:comp-gen}}

\begin{lemma}\label{lem:computation}
For $l>0$, $q> 0$ and $p$ being any integer, we have
	\begin{align*}
		[x_1^p ]x_{1} \cdot \frac{(1-x_{1}^{l+1})^{q}}{(1-x_{1})^{q}}\cdot \frac{x_{1}}{(1-x_{1}^{l+1})}
		\left(\frac{1-x_{1}^{l}}{1-x_{1}}-lx_{1}^{l}\right)=\sum_{i=0}^{l-1}(i+1)\kappa_{l}(p-i-2,q-1).
	\end{align*}
\end{lemma}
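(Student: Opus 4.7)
The plan is to reduce the left-hand side to a product of a known composition generating function and a polynomial whose coefficients are exactly $(i+1)$, and then read off the coefficient of $x_1^p$ directly.

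First I would cancel one factor of $(1-x_1^{l+1})$ between the numerator and denominator in the LHS, so that it becomes
\[
[x_1^p]\; x_1^2 \cdot \frac{(1-x_1^{l+1})^{q-1}}{(1-x_1)^{q-1}} \cdot \frac{1}{1-x_1}\left(\frac{1-x_1^l}{1-x_1}-lx_1^{l}\right).
\]
Next I would simplify the rightmost factor. Clearing the inner denominator gives $\frac{1-x_1^l}{1-x_1}-lx_1^l = \frac{1-(l+1)x_1^l+lx_1^{l+1}}{1-x_1}$, so the rightmost factor equals $\frac{1-(l+1)x_1^l+lx_1^{l+1}}{(1-x_1)^2}$. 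A short calculation (or, equivalently, differentiating $\sum_{i=0}^{l}x_1^i$) identifies this with the polynomial
\[
\sum_{i=0}^{l-1}(i+1)\,x_1^{i}.
\]

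The second ingredient is the standard identity
\[
\frac{(1-x_1^{l+1})^{q-1}}{(1-x_1)^{q-1}} \;=\; \sum_{n\geq 0}\kappa_l(n,q-1)\,x_1^{n},
\]
since this is the generating function for weak compositions of $n$ into $q-1$ parts each bounded by $l$, as follows at once from expanding $(1+x_1+x_1^2+\cdots+x_1^l)^{q-1}$.

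Combining these two reductions, the LHS becomes
\[
[x_1^p]\; x_1^{2}\;\Bigl(\sum_{i=0}^{l-1}(i+1)\,x_1^{i}\Bigr)\Bigl(\sum_{n\geq 0}\kappa_l(n,q-1)\,x_1^{n}\Bigr),
\]
and extracting the coefficient of $x_1^p$ by convolution yields $\sum_{i=0}^{l-1}(i+1)\kappa_l(p-i-2,q-1)$, which is the RHS. The only step with any content is the polynomial identity for the rightmost factor; the rest is bookkeeping, so I do not anticipate a real obstacle.
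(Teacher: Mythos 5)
Your proof is correct, and all the individual identities check out: the cancellation of one factor of $(1-x_1^{l+1})$, the identity $\frac{1}{1-x_1}\bigl(\frac{1-x_1^l}{1-x_1}-lx_1^l\bigr)=\frac{1-(l+1)x_1^l+lx_1^{l+1}}{(1-x_1)^2}=\sum_{i=0}^{l-1}(i+1)x_1^i$ (the derivative of the geometric sum), and the composition generating function are all right, and the final convolution gives exactly the claimed right-hand side. Your route differs from the paper's in how the residual factor is handled: the paper keeps $\frac{1}{(1-x_1)^{q+1}}$ intact, splits the numerator into three pieces to get three separate coefficient extractions, expands each as an infinite sum $\sum_{i}(i+1)\kappa_l(\cdot,q-1)$ or $l\sum_i \kappa_l(\cdot,q-1)$, and then recombines them by re-indexing so that all terms with $i\geq l$ telescope away, leaving $\sum_{i=0}^{l-1}$. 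You instead recognize up front that the entire factor $\frac{1-(l+1)x_1^l+lx_1^{l+1}}{(1-x_1)^2}$ is the \emph{finite} polynomial $\sum_{i=0}^{l-1}(i+1)x_1^i$, so the truncation at $i=l-1$ is built in from the start and no cancellation bookkeeping is needed. Both arguments rest on the same key identity for $\sum_n \kappa_l(n,m)x^n$; yours is shorter and makes the structure of the answer transparent, while the paper's is a more mechanical term-by-term expansion.
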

\proof
First, for $t\geq 0$ and $m\geq 0$, it is easy to see
\[\sum_{n\geq 0}\kappa_{t}(n,m)x^{n}=\sum_{n=-\infty}^{+\infty}\kappa_{t}(n,m)x^{n}=(1+x+\cdots+x^{t})^{m}=\left(\frac{1-x^{t+1}}{1-x}\right)^{m}.\]
Then, we have
\begin{align*}
    &[x_1^p ]x_{1} \cdot \frac{(1-x_{1}^{l+1})^{q}}{(1-x_{1})^{q}}\cdot \frac{x_{1}}{(1-x_{1}^{l+1})}
		\left(\frac{1-x_{1}^{l}}{1-x_{1}}-lx_{1}^{l}\right)\\
=	&[x_{1}^{p-2}]\frac{(1-x_{1}^{l+1})^{q}}{(1-x_{1})^{q}}\cdot \frac{1}{(1-x_{1}^{l+1})}
		\left(\frac{1-x_{1}^{l}}{1-x_{1}}-lx_{1}^{l}\right)\\
= &[x_{1}^{p-2}]\frac{(1-x_{1}^{l+1})^{q-1}}{(1-x_{1})^{q+1}}-[x_{1}^{p-l-2}]\frac{(1-x_{1}^{l+1})^{q-1}}{(1-x_{1})^{q+1}}-l[x_{1}^{p-l-2}]\frac{(1-x_{1}^{l+1})^{q-1}}{(1-x_{1})^{q}}\\
= &[x_{1}^{p-2}]\frac{(1-x_{1}^{l+1})^{q-1}}{(1-x_{1})^{q-1} (1-x_{1})^{2}}-[x_{1}^{p-l-2}]\frac{(1-x_{1}^{l+1})^{q-1}}{(1-x_{1})^{q-1} (1-x_{1})^{2}}-l[x_{1}^{p-l-2}]\frac{(1-x_{1}^{l+1})^{q-1}}{(1-x_{1})^{q} }\\
   = &\sum_{i=0}^{p-2} {i+1 \choose i}\kappa_{l}(p-i-2,q-1)-\sum_{i=0}^{p-l-2}{i+1 \choose i}\kappa_{l}(p-l-i-2,q-1)\\
      &\quad -l\sum_{i=0}^{p-l-2}\kappa_{l}(p-l-i-2,q-1)\\
   = &\sum_{i=0}^{p-2}(i+1)\kappa_{l}(p-i-2,q-1)-\sum_{i=0}^{p-l-2}(i+l+1)\kappa_{l}(p-l-i-2,q-1)\\
   = &\sum_{i=0}^{p-2}(i+1)\kappa_{l}(p-i-2,q-1)-\sum_{i=l}^{p-2}(i+1)\kappa_{l}(p-i-2,q-1)\\
    =&\sum_{i=0}^{l-1}(i+1)\kappa_{l}(p-i-2,q-1),
\end{align*}
and the proof follows.
\qed

\end{document}